\def\thtext#1{
   \catcode`@=11
   \gdef\@thmcountersep{. #1}
   \catcode`@=12
}
\def\threst{
   \catcode`@=11
   \gdef\@thmcountersep{.}
   \catcode`@=12
}
\theoremstyle{plain}
\newtheorem{thm}{Theorem} 
\newtheorem{prop}{Proposition}[section] 
\newtheorem{cor}[prop]{Corollary}
\newtheorem{ass}[prop]{Assertion}
\newtheorem{lem}[prop]{Lemma}
\theoremstyle{definition}
\newtheorem{rk}[prop]{Remark}
\def\0{\emptyset}
\def\:{\colon}
\newcommand{\dl}{\delta}
\newcommand{\e}{\varepsilon}
\renewcommand{\l}{\lambda}
\newcommand{\om}{\omega}
\newcommand{\Om}{\Omega}
\renewcommand{\r}{\rho}
\renewcommand{\S}{\Sigma}
\newcommand{\cA}{\mathcal{A}}
\newcommand{\cS}{\mathcal{S}}
\newcommand{\cT}{\mathcal{T}}
\newcommand{\R}{\mathbb{R}}
\def\rom#1{{\em #1}} 
\begin{document}

\title{Fermat--Steiner Problem in the Metric Space of Compact Sets endowed with Hausdorff Distance}
\author{A.~Ivanov, A.~Tropin, A.~Tuzhilin}
\date{}							

\maketitle

\begin{abstract}
Fermat--Steiner problem consists in finding all points in a metric space $Y$ such that the sum of distances from each of them to the points from some fixed finite subset of $Y$ is minimal. This problem is investigated for the metric space $Y=H(X)$ of compact subsets of a metric space $X$, endowed with the Hausdorff distance. For the case of a proper metric space $X$ a description of all compacts $K\in H(X)$ which the minimum is attained at  is obtained. In particular, the Steiner minimal trees for three-element boundaries are described. We also construct an example of a regular triangle in $H(\R^2)$, such that all its shortest trees have no ``natural'' symmetry. 
\end{abstract}

\section{Introduction and Preliminaries}

The initial statement of the problem that is now referred as the \emph{Steiner Problem\/} probably belongs to P.~Fermat, who asked to find a point in the plane, such that the sum of distances from it to three fixed points is minimal. J.~Steiner generalized the problem considering an arbitrary finite subset of the plane or of the three-space. Nowadays the Steiner Problem usually stands for the general problem of finding a shortest  tree connecting a finite subset of a metric space, which was stated for the case of the plane by V.~Jarnik and M.~K\"ossler~\cite{JarnikKossler}.  By \emph{Fermat--Steiner problem\/} we mean the J.~Steiner type generalization of the P.~Fermat problem, namely, for a fixed finite subset $\cA=\{a_1,\ldots,a_n\}$ of a metric space $(Y,\r)$ find all the points $y\in Y$ minimizing the value $S_\cA(y):=\sum_i\r(y,a_i)$. The set of all such points we denote by $\Sigma(\mathcal{A})$. 

Put $d(y,\cA)=\bigl(\r(y,a_1),\ldots,\r(y,a_n)\bigr)$ and  $\Omega(\cA)=\bigl\{d(y,\cA): y\in\Sigma(\cA)\bigr\}$. For each $d\in\Om(\cA)$ we put  $\Sigma_d(\mathcal{A})=\bigl\{y\in\S(\cA):d(y,\cA)=d\bigr\}$. Thus, the set $\Sigma(\mathcal{A})$  is partitioned into the classes $\Sigma_d(\mathcal{A})$, $d\in \Omega(\cA)$. Notice that generally the set $\S(\cA)$ may be empty, but the following result can be easily obtained from standard compactness and continuity arguments. 

\begin{ass}[Existence of Steiner Compact in Proper Metric Space]\label{ass:existence} 
Let $Y$ be a proper metric space, then the set $\S(\cA)$, and hence, the set $\Om(\cA)$ is not empty for an arbitrary nonempty finite $\cA\subset Y$.
\end{ass}

In the present paper we consider the Fermat--Steiner problem in the space $Y=H(X)$ of nonempty compact subsets of a given metric space $X$, endowed with the Hausdorff distance~\cite{Hausdorff}, see also~\cite{Burago}.  Geometry of the space $H(X)$ is used in such important applications as patterns recognition and comparison, constructing of continuous deformations of one geometrical object into another, see for example~\cite{Memoli} and~\cite{MemoliDG}. 

In the present paper the following results are obtained.

Let $\cA$ be a finite subset of $Y=H(X)$. Each element from $\Sigma(\mathcal{A})$ we call \emph{Steiner compact}. Notice that in most examples of the boundaries the Steiner compacts are not uniquely defined even in $\Sigma_d(\mathcal{A})$. In each class $\Sigma_d(\mathcal{A})$  minimal and maximal Steiner compacts with respect to the natural order generated by inclusion are naturally defined. We prove that if a class $\Sigma_d(\mathcal{A})$ is not empty, then each Steiner compact $K\in \Sigma_d(\mathcal{A})$ contains some minimal Steiner compact $K_\l\in\Sigma_d(\mathcal{A})$. 

For the case of a proper metric space $X$ we show that the set $\Sigma(\mathcal{A})$ is not empty for any finite nonempty $\cA$. Maximal Steiner compact is unique in each $\Sigma_d(\mathcal{A})$ and is equal to the intersection of the corresponding closed balls centered at the boundary compact sets. This maximal compact is denoted by $K_d(\mathcal{A})$. Also in this case we describe the whole class $\Sigma_d(\mathcal{A})$: a compact set $K$ belongs to $\Sigma_d(\mathcal{A})$, if and only if the inclusions $K_{\lambda}\subset K\subset K_d(\mathcal{A})$ hold for some minimal Steiner compact $K_{\lambda}\in \Sigma_d(\mathcal{A})$. Thus, the problem of finding the Steiner compacts is reduced to the description of maximal and minimal Steiner compacts in each class $\Sigma_d(\mathcal{A})$. Notice that finding a vector $d\in\Om(\cA)$ is a nontrivial problem. 

As an example, we consider the boundary set $\mathcal{A}=\{A_1,A_2,A_3\}\subset H(\mathbb{R}^2)$, where each $A_i$ consists of the following pair of points: a vertex of the regular triangle, and its image under the rotation by the angle $\frac{\pi}{3}$ around the center $o$ of the triangle. For this example we completely describe the set of the Steiner compacts. It turns out that this symmetrical boundary possesses exactly three classes $\Sigma_d(\mathcal{A})$ that are transferred one into another under rotations by the angles $\frac{2\pi}{3}$ and $\frac{4\pi}{3}$ around $o$. The maximal Steiner compact in each class is a nonconvex curvilinear $4$-gon. Moreover, each class contains unique minimal Steiner compact, and this Steiner compact consists of two points. The length of Steiner minimal tree is less than  three radii of the circle containing $\cA$.

Let us pass to Hausdorff distance geometry. Let $(X,\rho)$ be a metric space. Recall that the set  $B_r^X(A):=\bigl\{x\in X\: \rho(x,A)\leq r\bigr\}$, where $\rho(x,A)=\inf_{a\in A}\rho(x,a)$, is called a  \emph{closed neighborhood of radius $r$} of a set $A\subset X$ in $(X,\rho)$.  Notice that if $A$ consists of a single point, then $B_r^X(A)$ is the closed ball of radius $r$ in $X$ centered at this point. Below we omit the reference to the space and write $B_r(A)$ instead of $B_r^X(A)$, providing it does not lead to misunderstandings. 

The \emph{Hausdorff distance\/} between subsets $A$ and $B$ of a metric space $(X,\rho)$ is defined as the following value:
$$
d_H(A,B)=\inf\bigl\{r : B_r^X(A)\supset B, B_r^X(B)\supset A\bigr\}.
$$ 
An equivalent definition is given by the expression
$$
d_H(A,B)=\max\bigl\{\sup_{a\in A}\inf_{b\in B}\rho(a,b),\sup_{b\in B}\inf_{a\in A}\rho(a,b)\bigr\},
$$
i.e.,
$$
d_H(A,B)=\max\bigl\{\sup_{a\in A}\rho(a,B),\sup_{b\in B}\rho(b,A)\bigr\}.
$$
It is well-known that the Hausdorff distance is a metric on the space $H(X)$ of compact subsets of  $X$, see for example~\cite{Burago}.

Consider an arbitrary metric space $(X,\rho)$. For any points $a$ and $b$ from $X$ we sometimes write $|ab|$ instead of $\rho(a,b)$. 

We need the following technical results.

\begin{ass}\label{ass:point_estimate}
Let  $A$ and $B$ be compact sets in a metric space $(X,\rho)$. Then for any point $a\in A$ there exists a point $b\in B$ such that $\rho(a,b)\leq d_H(A,B)$.
\end{ass}

\begin{proof}
Indeed,
$$
d_H(A,B) \geq \sup_{a\in A}\inf_{b\in B}\rho(a,b)\geq \inf_{b\in B}\rho(a,b),
$$
where the first inequality follows directly from the definition of the Hausdorff distance, and the second inequality is valid for any point $a\in A$ because of the supremum definition. Since $B$ is compact and the function $\rho(a,b)$ is continuous, then the infimum $\inf_{b\in B}\rho(a,b)$ is attained at some point $b$. For this $b$ the inequality $\rho(a,b)\leq d_H(A,B)$ holds. Assertion is proved.
\end{proof}

\begin{ass}\label{ass:balls}
Let $A$ and $B$ be compact sets in a metric space $(X,\rho)$, and let $d_H(A,B)=r$. Then $A\subset B_r^X(B)$ and $B\subset B_r^X(A)$.  
\end{ass}

\begin{proof}
It suffices to show that an arbitrary point $a\in A$ belongs to a closed neighborhood $B_r^X(B)$. Due to Assertion~\ref{ass:point_estimate}, there exists a point $b\in B$ such that $\rho(a,b)\leq d_H(A,B)=r$. But then $\rho(a,B)\leq r$.
\end{proof}

\begin{ass}\label{ass:ass_intermediate_compact}
Let compact sets $A$, $B$, $C$, and $D$ be such that $A\subset B\subset C$ and $d_H(A,D)=d_H(C,D)=d$. Then $d_H(B,D)\leq d$.
\end{ass}

\begin{proof}
The equality $d_H(A,D)=d$ and Assertion~\ref{ass:balls} imply that $D\subset B_d(A)$. The inclusion $A\subset B$ imply the inclusion $B_d(A)\subset B_d(B)$, and hence, $D\subset B_d(B)$. Similarly, the equality $d_H(C,D)=d$ implies the inclusion $C\subset B_d(D)$. But the inclusion $B\subset C$ implies that $B\subset B_d(D)$. Thus, $D\subset B_d(B)$ and $B\subset B_d(D)$, that implies the inequality desired.
\end{proof}

\begin{figure}[h]
\begin{center}
\begin{minipage}[h]{0.50\linewidth}
\includegraphics[width=1\linewidth]{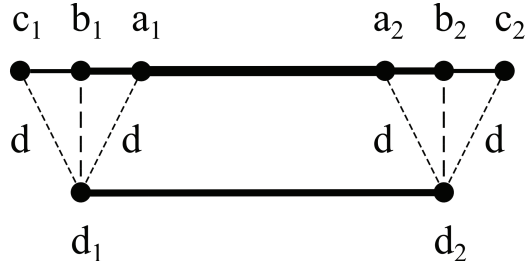}
\caption{An example to Assertion~\ref{ass:ass_intermediate_compact} with $d_H(B,D)<d$.}
\label{ris:counterexample_inclusion} 
\end{minipage}
\end{center}
\end{figure}

\begin{rk}\label{rk:rk_intermediate_compact}
An example of compact sets $A$, $B$, $C$, and $D$ from Assertion~\ref{ass:ass_intermediate_compact}, such that  $d_H(B,D)< d$ is shown in Figure~\ref{ris:counterexample_inclusion}. Here $A=[a_1,a_2]$, $B=[b_1,b_2]$, $C=[c_1,c_2]$, $D=[d_1,d_2]$ are segments such that $A\subset B\subset C$, $|a_1b_1|=|b_1c_1|=|a_2b_2|=|b_2c_2|$, the polygon $b_1d_1d_2b_2$ is a rectangle, and put $|a_1d_1|=d$. Then $d_H(A,D)=d_H(C,D)=|a_1d_1|=d$, but $d_H(B,D)=|b_1d_1|<d$.
\end{rk}

\begin{ass}\label{ass:ass_compact_neighbourhood}
Let $A$ be a compact subset in a proper space $X$. Then for any $d>0$ the set $B_d(A)$ is also a compact set.
\end{ass}

\begin{proof}
Since the ambient space $X$ is proper, then it suffices to show that $B_d(A)$is closed and bounded. Since $A$ is bounded, then we have $A \subset U_r(x)$ for some point $x$ from $X$ and some $r > 0$. Then  $B_d(A) \subset U_{r+d}(x)$, and hence, $B_d(A)$ is bounded. Now, let us show that $B_d(A)$ is closed.

Let $x$ be an adherent point of the set  $B_d(A)$. Then for any positive integer $i$ each $U_{1/i}(x)$ contains some point  $b_i$ from $B_d(A)$. Due to definition of $B_d(A)$, for each $i$ there exists  $a_i$ from $A$ such that $b_i$ belongs to  $B_d(a_i)$. Since $A$ is compact, then any sequence  $\{a_i\}$ of points from  $A$ contains a subsequence $\{a_{i_k}\}$ converging to some point $a\in A$. Consider the subsequence $\{b_{i_k}\}$ of the sequence $\{b_i\}$ corresponding to $\{a_{i_k}\}$. It converges to $x$, and hence, $|ax|\leq d$, due to continuity of the distance function. Therefore, $x\in B_d(A)$, and so, $B_d(A)$ is closed.
\end{proof}

\begin{figure}[h]
\begin{center}
\begin{minipage}[h]{0.50\linewidth}
\includegraphics[width=1\linewidth]{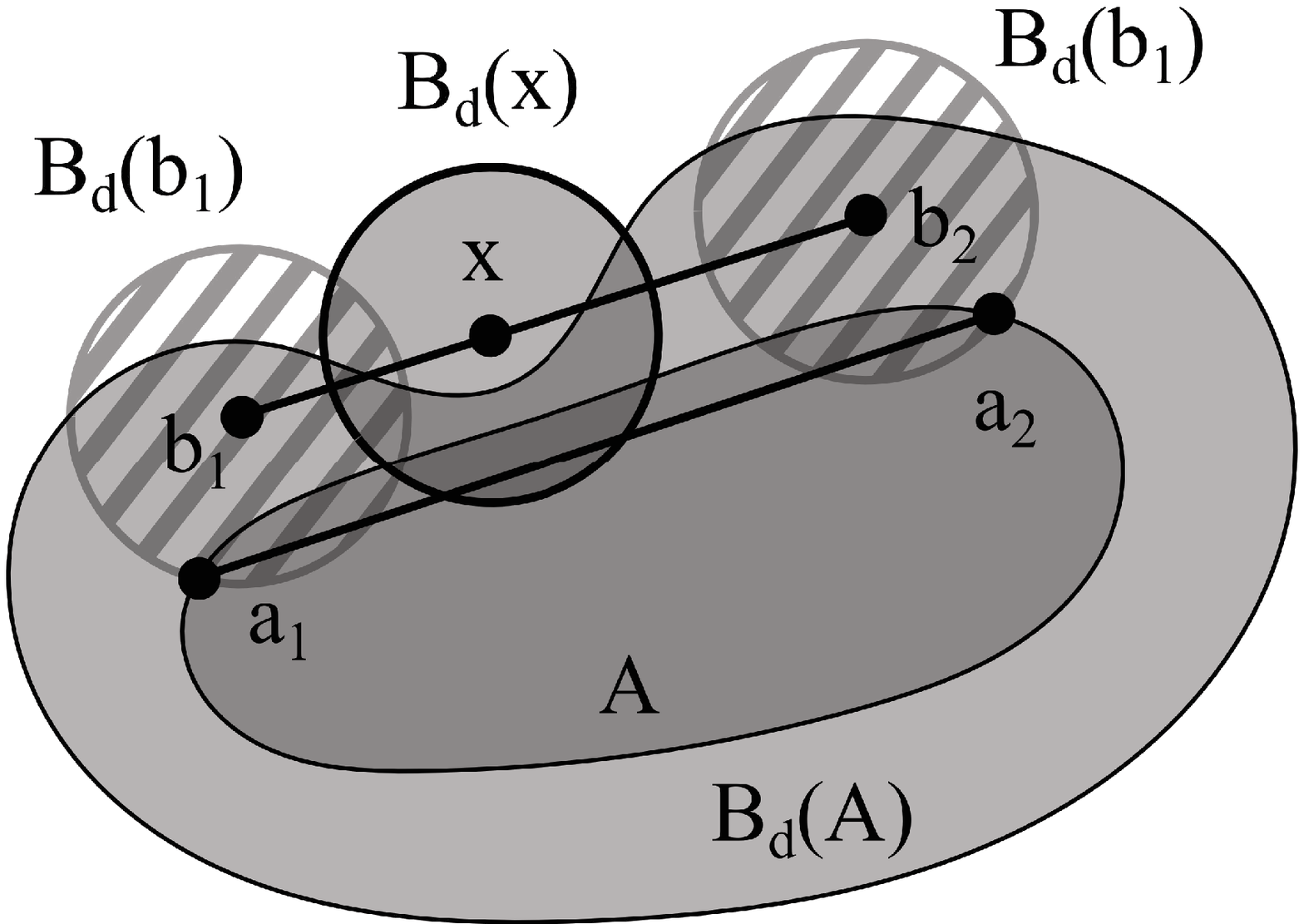}
\caption{To Assertion~\ref{ass:ass_convexity_neighbourhood}.}
\label{ris:convexity_neighbourhood} 
\end{minipage}
\end{center}
\end{figure}

\begin{ass}\label{ass:ass_convexity_neighbourhood}
Let  $A$ be a convex compact set in $\mathbb{R}^n$. Then the compact set $B_d(A)$ is also convex for any $d>0$.
\end{ass}

\begin{proof}
For any two points $b_1$, $b_2$ from $B_d(A)$ chose points $a_1,a_2$ from $A$ such that  $b_i\in B_d(a_i)$. Evidently, a closed $d$-neighborhood $B_d\bigl([a_1,a_2]\bigr)$ of a segment is convex. So, the segment $[b_1,b_2]$ is contained in  $B_d\bigl([a_1,a_2]\bigr)$. But $[a_1,a_2]\subset A$, therefore $B_d\bigl([a_1,a_2]\bigr)\subset B_d(A)$, and hence, $[b_1,b_2]$ is contained in $B_d(A)$.
\end{proof}

\begin{ass}\label{ass:boundedly_compact}
If a metric space $X$ is proper, then the space $H(X)$ of its compact subsets endowed with a Hausdorff metric is also proper.
\end{ass}

\begin{proof}
Consider an arbitrary bounded closed set $W\subset H(X)$. It lies in some ball $B_r^{H(X)}(K)\subset H(X)$ centered at some compact set $K\in H(X)$. Therefore, each compact set $C$ from $W$ is contained in a closed neighborhood $B=B_r^X(K)\subset X$ of the compact set $K$ in the space $X$. The set $B$ is closed, bounded and, hence, it is compact due to our assumptions.

It is well-known, see~\cite{Burago}, that the space $H(Y)$ is compact, if and only if the space $Y$ is compact itself. Therefore,  $C$, and hence, $W$ are contained in a compact set $H(B)$.  Thus, $W$ is a closed subset of a compact set, therefore  $W$ is compact. Assertion is proved. 
\end{proof}

The next result directly follows from Assertions~\ref{ass:existence} and~\ref{ass:boundedly_compact}. 

\begin{cor}\label{cor:H_space_existence}
Let $X$ be a proper metric space, and $\cA\subset H(X)$ be a nonempty finite subset. Then $\S(\cA)$ is not empty.
\end{cor}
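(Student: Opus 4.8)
The plan is to deduce this as an immediate specialization of the two preceding assertions, matching the hypotheses rather than doing any new analysis. The existence result, Assertion~\ref{ass:existence}, holds for an \emph{arbitrary} proper metric space $Y$ and an arbitrary nonempty finite subset $\cA\subset Y$; it guarantees that $\S(\cA)$ is nonempty in that generality. So to obtain the claim for $Y=H(X)$, the only thing that must be verified is that the space $H(X)$ to which we wish to apply it is itself proper.

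First I would invoke Assertion~\ref{ass:boundedly_compact}: since $X$ is assumed proper, that assertion states precisely that $H(X)$, endowed with the Hausdorff metric $d_H$, is again a proper metric space. This is exactly the hypothesis on the ambient space required by Assertion~\ref{ass:existence}. Then I would set $Y=H(X)$ and note that $\cA\subset H(X)$ is, by assumption, a nonempty finite subset, so all hypotheses of Assertion~\ref{ass:existence} are met; applying it yields that $\S(\cA)$ is not empty, which is the desired conclusion.

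There is no genuine obstacle here, since the content is entirely carried by Assertions~\ref{ass:existence} and~\ref{ass:boundedly_compact}, and the corollary is merely their conjunction instantiated at $Y=H(X)$. The only point worth stating explicitly is the logical chaining: properness of $X$ gives properness of $H(X)$ (Assertion~\ref{ass:boundedly_compact}), and properness of $H(X)$ together with finiteness and nonemptiness of $\cA$ gives $\S(\cA)\neq\0$ (Assertion~\ref{ass:existence}).
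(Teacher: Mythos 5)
Your proof is correct and matches the paper's approach exactly: the paper states that the corollary ``directly follows from Assertions~\ref{ass:existence} and~\ref{ass:boundedly_compact},'' which is precisely your chaining of properness of $H(X)$ (from properness of $X$) with the general existence result applied to $Y=H(X)$. Nothing is missing.
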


\section{Structure of Steiner Compacts in Proper Metric Spaces}

We start with the case of an arbitrary metric space $X$.

\begin{ass}[Existence of Minimal Steiner Compacts]\label{ass:exist_minimal_compact} 
Let $X$ be a metric space, $\mathcal{A}=\{A_1,\dots,A_n\}\subset H(X)$, $\Sigma(\mathcal{A})\ne\emptyset$, and $d\in\Omega(\cA)$. Then each compact set from the class $\Sigma_d(\mathcal{A})$ contains at least one minimal Steiner compact from $\Sigma_d(\mathcal{A})$. 
\end{ass}

\begin{proof}
The set $\Sigma_d(\mathcal{A})$ is naturally ordered with respect to inclusion. Let us show that each chain $\{K_s: s\in \mathcal{S}\}$ from $\Sigma_d(\mathcal{A})$ possesses a lower bound, and apply Zorn's Lemma. 

\begin{lem}\label{lem:inter_chain}
The intersection $\widetilde{K}:=\cap_{s\in \mathcal{S}}K_s$ of all the elements of an arbitrary chain is a nonempty compact set. 
\end{lem}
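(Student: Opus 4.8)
The plan is to treat the two claims separately: that $\widetilde{K}$ is compact, and that it is nonempty. Compactness is the routine half. Each $K_s$ is a compact subset of the metric space $X$, hence closed, so the intersection $\widetilde{K}=\cap_{s\in\mathcal{S}}K_s$ is closed as an intersection of closed sets. Fixing any index $s_0\in\mathcal{S}$, we have $\widetilde{K}\subset K_{s_0}$, and a closed subset of a compact set is compact; therefore $\widetilde{K}$ is compact. Note that this part uses only that the $K_s$ are compact, which holds because they are elements of $H(X)$, so properness of $X$ is not needed here.

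The substantive part is nonemptiness, which I would obtain from the finite intersection property combined with compactness. First I would observe that every $K_s$ is nonempty, since by definition the space $H(X)$ consists of nonempty compact sets. Next, because $\{K_s:s\in\mathcal{S}\}$ is a chain, it is totally ordered by inclusion, so any finite subfamily $K_{s_1},\dots,K_{s_m}$ has a smallest element $K_{s_j}$ with respect to inclusion; consequently $K_{s_1}\cap\dots\cap K_{s_m}=K_{s_j}\neq\emptyset$. Thus the family $\{K_s:s\in\mathcal{S}\}$ enjoys the finite intersection property.

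To finish, I would again fix $s_0\in\mathcal{S}$ and work inside the compact set $K_{s_0}$. The sets $K_s\cap K_{s_0}$ are closed subsets of $K_{s_0}$, and they still have the finite intersection property. By the standard characterization of compactness --- in a compact space every family of closed sets with the finite intersection property has nonempty intersection --- the intersection $\cap_{s\in\mathcal{S}}(K_s\cap K_{s_0})$ is nonempty. Since $\widetilde{K}\subset K_{s_0}$, this intersection coincides with $\widetilde{K}$, so $\widetilde{K}\neq\emptyset$.

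I do not expect a serious obstacle: the lemma is purely topological, and the only two points requiring care are deducing the finite intersection property from the fact that a chain is totally ordered (so every finite intersection collapses to its minimal member, which is nonempty), and using at least one member $K_{s_0}$ as the ambient compact space in which to invoke the finite intersection characterization of compactness. In particular, the lemma asserts nothing about whether $\widetilde{K}$ lies in $\Sigma_d(\mathcal{A})$; that membership, needed to produce a lower bound inside the class for Zorn's Lemma, is a separate matter to be settled afterwards.
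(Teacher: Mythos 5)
Your proof is correct and follows essentially the same route as the paper: compactness via closedness of each $K_s$ inside a fixed compact member, and nonemptiness via the finite intersection property applied within that member. The only cosmetic difference is that you intersect each $K_s$ with $K_{s_0}$, whereas the paper restricts the index set to $\{s : K_s\subset K_{s_0}\}$; both devices serve the same purpose.
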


\begin{proof}
Compact subsets of any metric (and hence, Hausdorff) space are closed, therefore the intersection of an arbitrary family of compact subsets of a metric space is compact (because it is a closed subset of a compact set). So, it remains to verify that  $\widetilde{K}$ is nonempty.

Choose an index $s_0$ and consider the index subset $\mathcal{S'}:=\{s\in \mathcal{S}: K_s\subset K_{s_0}\}$. In accordance with definition of a chain,  $\widetilde{K}=\cap_{s\in \mathcal{S'}}K_s$ for any $s_0$. The family $\{K_s: s\in \mathcal{S'}\}$ consists of closed subsets of the compact set $K_{s_0}$ satisfying the finite intersection property, therefore its intersection is nonempty, see for example~\cite{AlMir}. 
\end{proof}

\begin{lem}\label{lem:lim_chain}
For any  $\varepsilon>0$ there exists an element $K_{s''}$ of the chain, such that for any element $K_{s}$ of the chain, that is contained in $K_{s''}$, the inequality $d_H(K_s,\widetilde{K})<\varepsilon$ is valid.
\end{lem}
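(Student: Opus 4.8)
The plan is to establish the claimed ``downward convergence'' of the chain to $\widetilde{K}$ directly from compactness, exploiting the fact that one of the two one-sided Hausdorff distances vanishes. Indeed, since $\widetilde{K}=\cap_{s}K_s\subset K_s$ for every $s$, each point of $\widetilde{K}$ lies in $K_s$, so $\sup_{x\in\widetilde{K}}\rho(x,K_s)=0$. By the max-of-two-suprema form of the Hausdorff distance we then have
$$
d_H(K_s,\widetilde{K})=\sup_{y\in K_s}\rho(y,\widetilde{K}).
$$
Hence it suffices to find an element $K_{s''}$ of the chain lying in the open $\e$-neighborhood $U:=\{x\in X:\rho(x,\widetilde{K})<\e\}$ of $\widetilde{K}$: for then any chain element $K_s\subset K_{s''}$ also lies in $U$, and it remains only to bound $\sup_{y\in K_s}\rho(y,\widetilde{K})$ strictly below $\e$.

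To produce such a $K_{s''}$ I would argue by contradiction, assuming that $K_s\not\subset U$ for every $s$. Put $L_s:=K_s\setminus U$. Since $U$ is open, its complement is closed, so each $L_s$ is a closed subset of the compact set $K_s$ and hence compact; by the contradiction hypothesis each $L_s$ is nonempty. The sets $L_s$ inherit the chain structure of the $K_s$ (if $K_s\subset K_t$ then $L_s\subset L_t$), so exactly as in the proof of Lemma~\ref{lem:inter_chain} one fixes an index $s_0$, restricts to the subchain $\{s:K_s\subset K_{s_0}\}$, and observes that this family of closed subsets of the compact set $K_{s_0}$ has the finite intersection property (a finite subfamily of a chain has a least element, which is nonempty). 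Compactness of $K_{s_0}$ then yields $\cap_s L_s\ne\emptyset$.

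On the other hand, $\cap_s L_s=\bigl(\cap_s K_s\bigr)\setminus U=\widetilde{K}\setminus U$, and $\widetilde{K}\subset U$ because every point of $\widetilde{K}$ has zero distance to $\widetilde{K}$; thus $\widetilde{K}\setminus U=\emptyset$, a contradiction. Consequently some element $K_{s''}$ of the chain is contained in $U$. Finally, for any chain element $K_s\subset K_{s''}\subset U$, the continuous function $y\mapsto\rho(y,\widetilde{K})$ attains its maximum over the compact set $K_s$ at some point $y_0\in K_s\subset U$, so this maximum equals $\rho(y_0,\widetilde{K})<\e$, whence $d_H(K_s,\widetilde{K})=\sup_{y\in K_s}\rho(y,\widetilde{K})<\e$.

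I expect the only genuinely delicate point to be this last step: passing from containment $K_s\subset U$ in an \emph{open} neighborhood to the \emph{strict} inequality $d_H(K_s,\widetilde{K})<\e$. This is where compactness of $K_s$ is essential, since it guarantees that the supremum is attained, and the attaining point lies in the open set $U$, forcing the value strictly below $\e$; without compactness one could only conclude $d_H(K_s,\widetilde{K})\le\e$. The finite-intersection argument is routine given Lemma~\ref{lem:inter_chain}, and it is worth noting that no properness of $X$ is used in this lemma.
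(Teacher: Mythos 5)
Your proof is correct, and it is essentially the dual formulation of the paper's own compactness argument, organized somewhat more cleanly. The paper argues by contradiction on the whole statement: assuming the lemma fails, it first shows (via the same monotonicity that you make explicit through the one-sided Hausdorff distance) that $d=\inf_{s}d_H(K_s,\widetilde{K})>0$, and then covers the compact set $K_{s_0}$ by the open sets $X\setminus K_s$, $s\in\mathcal{S}'$, together with $U_{d/2}(\widetilde{K})$; extracting a finite subcover and taking the smallest chain element $K_{s'}$ occurring in it, the paper observes that $X\setminus K_{s'}$ and $U_{d/2}(\widetilde{K})$ would then have to cover $K_{s'}$ itself, which is impossible since $d_H(K_{s'},\widetilde{K})\geq d$. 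You run the same compactness mechanism through the finite intersection property instead of open covers: the sets $L_s=K_s\setminus U$ form a chain of compact sets which, were they all nonempty, would have nonempty total intersection, contradicting $\widetilde{K}\subset U$. The two mechanisms are of course equivalent (complements of an open cover versus FIP for closed sets), but your decomposition buys two things: you work directly with the given $\varepsilon$ and skip the paper's preliminary reduction to a positive infimum $d$, and you address explicitly the passage from the containment $K_s\subset U$ in an open neighborhood to the \emph{strict} inequality $d_H(K_s,\widetilde{K})<\varepsilon$ (via the maximum attained on a compact set), a point which the paper's contradiction format handles silently because the negation of $<\varepsilon$ is $\geq\varepsilon$. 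Your closing remark is also accurate: neither proof uses properness of $X$; both rely only on compactness of the chain elements and on the nonemptiness of $\widetilde{K}$ established in Lemma~\ref{lem:inter_chain}.
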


\begin{proof}
Assume the contrary. Then $d=\inf_{s} d_H(K_s,\widetilde{K})>0$. Indeed, if $d=0$, then for any $\varepsilon>0$ there exists a compact set $K_{s''}$ such that $d_H(K_{s''},\widetilde{K})<\varepsilon$,  but then the inequality $d_H(K_s,\widetilde{K})<\varepsilon$ is valid for all $K_s\subset K_{s''}$ also, a contradiction.

Fix some $s_0\in \cS$ and consider $\mathcal{S'}=\{s\in \mathcal{S}\: K_s\subset K_{s_0}\}$ again.  Open sets $\{X\setminus K_s: s\in \mathcal{S'}\}$ together with an arbitrary open neighborhood $U_{\e}(\widetilde{K})$ of $\widetilde{K}$, $\e>0$, form an open covering of the compact set $K_{s_0}$. Indeed, if it is not so, then there exists a point $x\in K_{s_0}$ that does not belong to any set $X\setminus K_s$, $s\in\cS'$, and also does not lie in $U_\varepsilon(\widetilde{K})$. But in this case $x$ belongs to all $K_s$, $s\in\cS'$, but $s\not\in\widetilde{K}=\cap_{s\in\cS'}K_s$, a contradiction. 

Take $\e=d/2$, and choose a finite subcovering from the corresponding open covering. Let $X\setminus K_{s'}$ be the greatest element with respect to inclusion among the elements of the form $X\setminus K_{s}$ from this covering. Then  $X\setminus K_{s'}$ and $U_{d/2}(\widetilde{K})$ cover the compact set $K_{s_0}$. But since $\widetilde{K}\subset K_{s'}$, then $d\leq d_H(K_{s'},\widetilde{K})=\inf\{r\: K_{s'}\subset U_r(\widetilde{K})\}$. So, $K_{s'}$ is not contained in $U_{d/2}(\widetilde{K})$, and hence the sets $X\setminus K_{s'}$ and $U_{d/2}(\widetilde{K})$ do not cover $K_{s'}$, and therefore, they do not cover  $K_{s_0}\supset K_{s'}$. This contradiction completes the proof of Lemma.
\end{proof}

For any bounded compact $A_j$ and any  $s\in\cS$, the triangle inequality implies that $|d_H(A_j,\widetilde{K})-d_H(A_j,K_s)|\leq d_H(K_s,\widetilde{K})$, where $d_H(A_j,K_s)=d_j$, because $K_s\in\Sigma_d(\cA)$. Due to Lemma~\ref{lem:lim_chain}, for any $\varepsilon>0$ there exists an element $K_{s'}$ of the chain, such that for any element $K_{s}$ of the chain that is contained in $K_{s'}$, the inequality $d_H(K_s,\widetilde{K})<\varepsilon$ is valid. Then $\bigl|d_H(A_j,\widetilde{K})-d_j\bigr|<\varepsilon$ and, due to arbitrariness of $\varepsilon>0$, the equality $d_H(A_j,\widetilde{K})=d_j$ holds. Therefore, the compact set $\widetilde{K}$ also belongs to $\Sigma_d(\mathcal{A})$. Due to construction, it is a lower bound of the chain under consideration. Therefore, due to Zorn's Lemma, any compact set $K$ from $\Sigma_d(\mathcal{A})$ majorizes some minimal element. In our terms the latter means that any $K$ from $\Sigma_d(\mathcal{A})$ contains at least one minimal Steiner compact.
\end{proof}

\begin{ass}[Intermediate  Steiner Compacts]\label{ass:ass_intermediate_1_compact}
Let $X$ be a metric space, $\mathcal{A}=\{A_1,\dots,A_n\}\subset H(X)$, $\Sigma(\mathcal{A})\ne\emptyset$, and $d\in\Omega(\cA)$. Assume that $K_1,\,K_2\in \Sigma_d(\mathcal{A})$, such that $K_1\subset K_2$. Then any compact set $K$ such that $K_1\subset K\subset K_2$ belongs to $\Sigma_d(\mathcal{A})$ too. 
\end{ass}

\begin{proof}
Due to Assertion~\ref{ass:ass_intermediate_compact}, for any compact set $K$ such that $K_1\subset K\subset K_2$ the inequalities $d_H(A_i,K)\leq d_H(A_i,K_1)$ are valid for all  $i=1,\dots,n$. Therefore, $S_\cA(K)\leq S_\cA(K_1)$ (recall that by $S_\cA(K)$ we denote the value $\sum_i d_H(K,A_i)$). But the function  $S_\cA$ attains it least value at the compact set $K_1$, therefore $S_\cA(K)= S_\cA(K_1)$, and hence,  $d_H(A_i,K)= d_H(A_i,K_1)$ for all $i$. So, $K\in\Sigma_d(\mathcal{A})$. 
\end{proof}

Let $X$ be a proper metric space. Let $d=(d_1,\dots,d_n)$ be a vector with nonnegative components. Put $K_d(\mathcal{A}):= \cap_{i=1}^{n} B_{d_i}^X(A_i)$. Due to Assertion~\ref{ass:ass_compact_neighbourhood}, $K_d(\cA)$  is a compact set (it may be empty).  

The following Assertion generalizes results from~\cite{Schlicker2007} describing compact sets located between two fixed compact sets in a given distances in the sense of Hausdorff (so-called \emph{compact sets in $s$-location\/}) to the case of three and more compact sets.

\begin{ass}[Existence and Uniqueness of Maximal Steiner Compact]\label{ass:uniqueness_maximal_compact} 
Let $X$ be a proper metric space, $\mathcal{A}=\{A_1,\dots,A_n\}\subset H(X)$. Then $\S(\cA)$ and $\Om(\cA)$ are not empty, and for any $d\in\Omega(\cA)$ the set $\Sigma_d(\mathcal{A})$ contains  unique maximal Steiner  compact, and this Steiner compact is equal to $K_d(\mathcal{A})$. 
\end{ass}

\begin{proof}
The sets $\S(\cA)$ and $\Om(\cA)$ are not empty due to Corollary~\ref{cor:H_space_existence}. Let $d$ be an arbitrary element of $\Om(\cA)$, and let $K$ be an arbitrary Steiner compact from  $\Sigma_d(\mathcal{A})$. Recall that $d=(d_1,\dots,d_n)$, where $d_i=d_H(A_i,K)$.  Due to Assertion~\ref{ass:balls}, $K\subset B_{d_i}^X(A_i)$ for all $i$, and hence, $K\subset \cap_i B_{d_i}^X(A_i)=K_d(\cA)$.

Put $d'_i=d_H(A_i,K_d(\mathcal{A}))$. Show that  $d'_i\leq d_i$. To do that it suffices to verify that $K_d(\mathcal{A})\subset B_{d_i}^X(A_i)$ and $A_i\subset B_{d_i}^X(K_d(\mathcal{A}))$. The first inclusion is valid in accordance with the definition of  $K_d(\mathcal{A})$. The second inclusion is valid, because $K\subset K_d(\mathcal{A})$, and hence, $B_{d_i}^X(K)\subset B_{d_i}^X(K_d(\mathcal{A}))$, and $A_i\subset B_{d_i}^X(K)$ in accordance with Assertion~\ref{ass:balls}.

Thus, $S_\cA\bigl(K_d(\mathcal{A})\bigr)\leq S_\cA(K)$. But the function $S_\cA$ attains its least value at $K$, therefore $S_\cA\bigl(K_d(\mathcal{A})\bigr)= S_\cA(K)$, and hence, $d'_i=d_i$ for any $i$, and so $K_d(\mathcal{A})\in\Sigma_d(\mathcal{A})$. The inclusion $K\subset K_d(\mathcal{A})$ proved above implies that $K_d(\mathcal{A})$ is the greatest element with respect to inclusion in the  class $\Sigma_d(\mathcal{A})$. Therefore, it is unique maximal element in $\Sigma_d(\mathcal{A})$. 
\end{proof}

\begin{thm}[Structure of $\Sigma_d(\mathcal{A})$ in a Proper Metric Space]\label{th:th_structure_1_compact}
Let $X$ be a proper metric space, $\mathcal{A}=\{A_1,\dots,A_n\}\subset H(X)$. Then $\S(\cA)$ and $\Om(\cA)$ are not empty, and for any $d\in\Omega(\cA)$ a compact set $K$ belongs to the class $\Sigma_d(\mathcal{A})$, if and only if $K_{\lambda}\subset K\subset K_d(\mathcal{A})$ for some minimal Steiner compact $K_{\lambda}\in \Sigma_d(\mathcal{A})$ and the unique maximal Steiner compact $K_d(\mathcal{A})$ from $\Sigma_d(\mathcal{A})$.
\end{thm}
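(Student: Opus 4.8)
The plan is to prove the two directions of the biconditional separately, relying almost entirely on the assertions already established. The statement is essentially a synthesis of the three previous results (Assertions~\ref{ass:exist_minimal_compact}, \ref{ass:ass_intermediate_1_compact}, and~\ref{ass:uniqueness_maximal_compact}), so the work is one of assembly rather than new estimation. The nonemptiness of $\S(\cA)$ and $\Om(\cA)$ is immediate from Assertion~\ref{ass:uniqueness_maximal_compact} (equivalently, from Corollary~\ref{cor:H_space_existence}), so I would dispatch that in a single sentence at the outset and fix an arbitrary $d\in\Om(\cA)$.

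For the forward direction, I would suppose $K\in\Sigma_d(\cA)$ and produce the required sandwiching inclusions. The upper inclusion $K\subset K_d(\cA)$ is exactly the content of Assertion~\ref{ass:uniqueness_maximal_compact}, which identifies $K_d(\cA)$ as the unique maximal element of the class, so every Steiner compact in $\Sigma_d(\cA)$ is contained in it. The lower inclusion follows from Assertion~\ref{ass:exist_minimal_compact}: since $K\in\Sigma_d(\cA)$, that assertion guarantees a minimal Steiner compact $K_\l\in\Sigma_d(\cA)$ with $K_\l\subset K$. Together these give $K_\l\subset K\subset K_d(\cA)$, as required.

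For the reverse direction, I would assume $K_\l\subset K\subset K_d(\cA)$ for some minimal Steiner compact $K_\l\in\Sigma_d(\cA)$ and show $K\in\Sigma_d(\cA)$. Here I would invoke Assertion~\ref{ass:ass_intermediate_1_compact} directly: both endpoints $K_\l$ and $K_d(\cA)$ lie in $\Sigma_d(\cA)$ (the former by hypothesis as a minimal Steiner compact, the latter by Assertion~\ref{ass:uniqueness_maximal_compact}), they satisfy $K_\l\subset K_d(\cA)$, and $K$ is squeezed between them, so the intermediate-compact assertion places $K$ in $\Sigma_d(\cA)$ too. This closes the equivalence.

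I do not anticipate a genuine obstacle, since the theorem is a packaging of earlier lemmas. The only point requiring mild care is to confirm that the hypotheses of Assertion~\ref{ass:ass_intermediate_1_compact} are literally met in the reverse direction, i.e.\ that $K_d(\cA)$ genuinely belongs to $\Sigma_d(\cA)$ (which is why Assertion~\ref{ass:uniqueness_maximal_compact} is needed as the membership certificate for the upper endpoint) and that $K_\l$ being a \emph{minimal} Steiner compact in particular entails $K_\l\in\Sigma_d(\cA)$. Once these memberships are noted explicitly, the argument is a clean citation of the two structural assertions with no further computation.
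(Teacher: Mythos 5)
Your proposal is correct and follows essentially the same route as the paper's own proof: nonemptiness via Corollary~\ref{cor:H_space_existence}, the upper inclusion from Assertion~\ref{ass:uniqueness_maximal_compact}, the lower inclusion from Assertion~\ref{ass:exist_minimal_compact}, and the converse from Assertion~\ref{ass:ass_intermediate_1_compact}. Your explicit check that both endpoints lie in $\Sigma_d(\mathcal{A})$ before invoking the intermediate-compact assertion is a point the paper leaves implicit, but the argument is otherwise identical.
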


\begin{proof}
The sets $\S(\cA)$ and $\Om(\cA)$ are not empty due to Corollary~\ref{cor:H_space_existence}. The existence, uniqueness and form of a maximal Steiner compact in $\Sigma_d(\mathcal{A})$ follow from Assertion~\ref{ass:uniqueness_maximal_compact}. Due to Assertion~\ref{ass:exist_minimal_compact}, each Steiner  compact $K$ contains some minimal Steiner compact $K_{\lambda}$. Thus, for any Steiner compact $K$ the inclusions $K_{\lambda}\subset K\subset K_d(\mathcal{A})$ are valid.

Conversely, due to Assertion~\ref{ass:ass_intermediate_1_compact}, the inclusions $K_{\lambda}\subset K\subset K_d(\mathcal{A})$ imply that $K\in\Sigma_d(\mathcal{A})$. 
\end{proof}

Consider the particular case  $X=\mathbb{R}^m$ in more details.

\begin{cor}[]\label{cor:cor_continuum_1_compact} 
Let  $\mathcal{A}=\{A_1,\dots,A_n\}\subset H(\R^m)$, and $d\in\Omega(\cA)$. If the maximal Steiner compact $K_d(\mathcal{A})$ from the class $\Sigma_d(\mathcal{A})$ is convex and does not coincide with some\/ {\rm(}and hence, with any\/{\rm)} minimal Steiner compact $K_\l\in\Sigma_d(\mathcal{A})$, then the cardinality of $\Sigma_d(\mathcal{A})$ is continuum.
\end{cor}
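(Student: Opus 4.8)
The plan is to exhibit an explicit injection of an interval into $\Sigma_d(\mathcal{A})$ and then to bound the total cardinality from above by that of $H(\R^m)$. By Theorem~\ref{th:th_structure_1_compact}, a compact set $K$ lies in $\Sigma_d(\mathcal{A})$ precisely when $K_\l\subset K\subset K_d(\mathcal{A})$ for some minimal Steiner compact $K_\l$; so it suffices to produce continuum many compacts squeezed between a fixed minimal $K_\l$ and the maximal $K_d(\mathcal{A})$. By hypothesis $K_\l\ne K_d(\mathcal{A})$, and since every element of $\Sigma_d(\mathcal{A})$, in particular $K_\l$, is contained in the greatest element $K_d(\mathcal{A})$ by Assertion~\ref{ass:uniqueness_maximal_compact}, the inclusion $K_\l\subset K_d(\mathcal{A})$ is strict. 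Hence I may fix a point $p\in K_d(\mathcal{A})\setminus K_\l$.

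First I would connect $p$ to $K_\l$ by a segment lying inside $K_d(\mathcal{A})$. Choosing any $q\in K_\l$ (the compact $K_\l$ is nonempty as an element of $H(X)$), convexity of $K_d(\mathcal{A})$ guarantees that the whole segment $[q,p]\subset K_d(\mathcal{A})$. Write $p_t=(1-t)q+tp$ for $t\in[0,1]$. The set $\{t:p_t\in K_\l\}$ is closed, being the preimage of the closed set $K_\l$ under the continuous map $t\mapsto p_t$; it contains $0$ and omits $1$, hence it has a maximum $t^*<1$, and $p_t\notin K_\l$ for every $t\in(t^*,1]$.

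Now for each $t\in(t^*,1]$ I set $K_t:=K_\l\cup[p_{t^*},p_t]$. Each $K_t$ is compact as a union of two compacts, it contains $K_\l$, and it is contained in $K_d(\mathcal{A})$ because $[p_{t^*},p_t]\subset[q,p]\subset K_d(\mathcal{A})$; thus $K_t\in\Sigma_d(\mathcal{A})$ by Theorem~\ref{th:th_structure_1_compact}. The key point is distinctness: for $t^*<t_1<t_2\le1$ the point $p_{t_2}$ belongs to $K_{t_2}$ but not to $K_{t_1}$, since $p_{t_2}\notin K_\l$ (as $t_2>t^*$) and $p_{t_2}\notin[p_{t^*},p_{t_1}]$ (as $t_2>t_1$). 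Hence $t\mapsto K_t$ is an injection of the interval $(t^*,1]$ into $\Sigma_d(\mathcal{A})$, giving $|\Sigma_d(\mathcal{A})|\ge\mathfrak{c}$.

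For the reverse inequality I would invoke that $\R^m$ is second countable: fixing a countable base, every open set is the union of the basic sets it contains, so there are at most $\mathfrak{c}$ open, hence at most $\mathfrak{c}$ closed, hence at most $\mathfrak{c}$ compact subsets of $\R^m$. Therefore $|\Sigma_d(\mathcal{A})|\le|H(\R^m)|\le\mathfrak{c}$, and the two bounds give $|\Sigma_d(\mathcal{A})|=\mathfrak{c}$. I expect the only delicate point to be the distinctness of the family $K_t$: one must discard the portion of the segment already lying in $K_\l$, which is exactly why the threshold $t^*$ is introduced, and once the subsegment is started at the last exit point from $K_\l$, injectivity is immediate.
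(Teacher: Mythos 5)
Your proof is correct and follows essentially the same route as the paper: both arguments use Theorem~\ref{th:th_structure_1_compact} together with convexity of $K_d(\mathcal{A})$ to squeeze a continuum-sized family of compacts between a fixed minimal $K_\l$ and $K_d(\mathcal{A})$, and then bound $|\Sigma_d(\mathcal{A})|$ above by the cardinality of the set of all compact subsets of $\R^m$. The only cosmetic differences are that the paper adjoins single points $K_\l\cup\{z\}$ (isolating a subinterval disjoint from $K_\l$ via an open ball around a point of $K_d(\mathcal{A})\setminus K_\l$, rather than via your last-exit parameter $t^*$) and proves the upper bound by realizing each compact as the closure of a sequence instead of by second countability.
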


\begin{proof}
Consider an arbitrary point $x$ from $K_d(\mathcal{A})$ and some point $y\in K_d(\mathcal{A})\setminus K_\l$ that exists due to our assumptions. Then the segment $[x,y]$ belongs to $K_d(\mathcal{A})$ due to its convexity. Further, since $K_\l$ is a closed subset of $K_d(\mathcal{A})$, then there exists an open ball $U$ centered at the point $y$ that does not intersect $K_\l$. Therefore, the interval $[x,y]\cap U$ does not intersect $K_\l$, it is contained in  $K_d(\mathcal{A})$, and its cardinality is continuum.  Thus, for any point $z\in [x,y]\cap U$, the compact set $K(z)=K_\l\cup\{z\}$ satisfies the inclusions $K_\l\subset K(z) \subset K_d(\mathcal{A})$, and hence, in accordance with Thorem~\ref{th:th_structure_1_compact}, each $K(z)$ belongs to $\Sigma_d(\mathcal{A})$. Thus, $\Sigma_d(\mathcal{A})$ contains a subset of cardinality continuum.

It remains to notice, that the cardinality of the set of all compact subsets of $\R^m$ is continuum also. Indeed, it is well-known, see for example~\cite{Shilov}, that the cardinality of the set of all real sequences is continuum. So, the same is valid for the family of all the sequences of points in  $\R^m$. On the other hand, each compact set in $\R^m$ is a closure of some its at most countable subset that can be considered as a sequence of points in $\R^m$. 
\end{proof}

\begin{ass}[Convexity of Maximal Steiner compact for a Convex Boundary]\label{ass:convexity_1_compact} 
Let  $\mathcal{A}=\{A_1,\dots,A_n\}\subset H(\R^m)$, and let all $A_i$ be convex. Then for any $d\in\Omega(\cA)$ the maximal Steiner compact $K_d(\mathcal{A})$ is also convex.
\end{ass}

\begin{proof}
Due to Assertion~\ref{ass:uniqueness_maximal_compact}, $K_d(\mathcal{A}) = \cap_{i=1}^{n} B_{d_i}(A_i)$. Since  $A_i$ are convex, then in accordance with Assertion~\ref{ass:ass_convexity_neighbourhood}, the sets $B_{d_i}(A_i)$ are convex for all $d_i$, and so, their intersection $K_d(\mathcal{A})$ is also convex.
\end{proof}

\section{Example of Symmetric Boundary in $H(\R^2)$ with Three Classes of Steiner Compacts}

In this Section we consider an example of three-element boundary in $H(\R^2)$ (an equilateral triangle). For it we construct explicitly all minimal and maximal Steiner compacts (and, hence, all Steiner compacts). In spite of the symmetry of the boundary with respect to the rotations of the plane by the angles $\pm2\pi/3$, the corresponding solutions to Fermat--Steiner problem turn out to be not invariant with respect to those rotations. 

In what follows we write $|xA|$ instead of $\inf_{a\in A}|xa|$ for any point  $x\in \mathbb{R}^2$ and any compact set $A\subset \mathbb{R}^2$. By $E(ab,s)$ we denote the ellipse with foci are located at  the points  $a$, and $b$ and and whose sum of focal radii is equal to $s$. 

Consider a boundary $\mathcal{A}=\{A_1,A_2,A_3\}\subset H(\mathbb{R}^2)$, where  $A_i=\{a_i,b_i\}$, $i=1,\,2,\,3$, the points $a_i$ are located at the vertices of a regular triangle inscribed in the unit circle centered at the origin $o$, and the points $b_i$ are obtained from the corresponding points $a_i$ by the rotation with respect to $o$ by the angle $\pi/3$.

\begin{figure}[h]
\begin{center}
\includegraphics[width=0.55\linewidth]{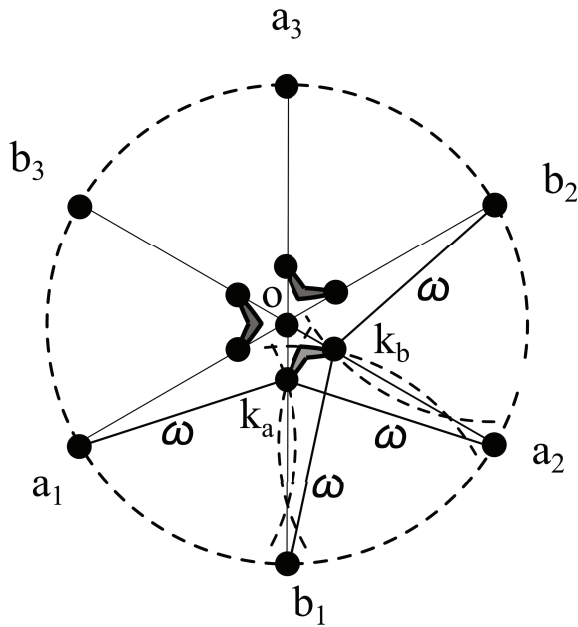}
\caption{Boundary compacts $A_i=\{a_i,b_i\}$ and three maximal Steiner compacts (filled in gray).}
\label{ris:splitting_arc_60_common}
\end{center}
\end{figure}

Let $\omega\in(\sqrt3/2,1)$. By $k_a(\om)$ and $k_b(\om)$ we denote the closest to $o$ points from the sets  $\partial B_{\omega}(a_1)\cap \partial B_{\omega}(a_2)$ and $\partial B_{\omega}(b_1)\cap \partial B_{\omega}(b_2)$, respectively (see Figure~\ref{ris:splitting_arc_60_common}). For the chosen $\om$ the points $k_a(\om)$ and $k_b(\om)$ belong to the segments $[o,b_1]$ and $[o,a_2]$, respectively. Put $t=\bigl|ok_a(\om)\bigr|=\bigl|ok_b(\om)\bigr|$. It is clear that $t\in(0,1/2)$, and $\om=\sqrt{1+t^2-t}$. In what follows it is convenient to use the parameter $t$. Consider the family $\cT$ of two-point compacts $T_{ab}(t)=\bigl\{k_a(t),k_b(t)\bigr\}$, $t\in(0,1/2)$, where $k_a(t)=k_a\bigl(\om(t)\bigr)$, and $k_b(t)=k_b\bigl(\om(t)\bigr)$.  To be short, we sometimes omit the explicit parameter and write $k_a$ instead of $k_a(t)$, etc.

Put $t_0=\frac{\sqrt{5}}{4}-\frac{1}{2}\sqrt{\sqrt{5}-7/4}$, $\om_0=\sqrt{1+t_0^2-t_0}$,  and $T_{ab}(t_0)=K_0$. The value  $t_0$ is found as a root of degree $4$ algebraic equation, see Lemma~\ref{lem:k_a_k_b_length} below. The following result solves completely the Steiner Problem for the chosen boundary $\cA$ in $H(\R^2)$. 

\begin{thm}[]\label{th:th_splitting_arc} 
Under the above notations,  
\begin{enumerate}
\item\label{item:values} The compact  $K_0$ is a solution to the Fermat--Steiner problem for $\cA$, and the distances from $K_0$ to $A_1$ and $A_2$ are equal to $\om_0$, and the distance from $K_0$ to $A_3$ is equal to  $\sqrt{1+t_0^2+t_0}$, i.e., $K_0\in\Sigma_{d}(\mathcal{A})$, where $d=(\om_0,\om_0,\sqrt{1+t_0^2+t_0})$\rom; in particular, 
\begin{multline*}
S_\cA(K_0)=2\om_0+\sqrt{1+t_0^2+t_0}=\\ 
=\sqrt{\frac{35}{8}+\frac{3}{8}\sqrt{-47 + 80 \sqrt{5}}}=2.94645\dots<3;
\end{multline*}
\item\label{item:min_com} $K_0$ is unique minimal Steiner compact in $\Sigma_{d}(\mathcal{A})$\rom;
\item\label{item:max_com} The maximal Steiner compact from $\Sigma_{d}(\mathcal{A})$ is equal to the corresponding set $K_d(\cA)$\rom; 
\item The set $\Sigma(\mathcal{A})$ of all Steiner compacts is partitioned into three classes $\Sigma_{g}(\mathcal{A})$, the corresponding vectors $g$ are obtained from $d$ by cyclic permutations, and the corresponding $\Sigma_{g}(\mathcal{A})$ are obtained from  $\Sigma_{d}(\mathcal{A})$ by plane rotation by the angles $2\pi/3$ and $4\pi/3$ around the point  $o$.
\end{enumerate}
\end{thm}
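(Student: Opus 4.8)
I would fix coordinates with $a_1,a_2,a_3$ at the angles $90^\circ,210^\circ,330^\circ$ on the unit circle, so that $b_1,b_2,b_3$ land at $150^\circ,270^\circ,30^\circ$ and the six points $a_1,b_1,a_2,b_2,a_3,b_3$ are the vertices of a regular hexagon, each $A_i$ being a pair of \emph{adjacent} vertices. Two symmetries drive the whole argument: the rotation $R$ by $2\pi/3$ about $o$ cyclically permutes $A_1\mapsto A_2\mapsto A_3\mapsto A_1$, hence preserves $S_\cA$ and permutes $\S(\cA)$; and the reflection $\sigma$ in the line through $o$ at $180^\circ$ swaps $A_1\leftrightarrow A_2$, fixes $A_3$, and fixes $K_0$. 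Since $k_a$ sits on the ray at $150^\circ$ and $k_b$ on the ray at $210^\circ$, both at distance $t$ from $o$, the relation $\om=\sqrt{1+t^2-t}$ comes from $|k_aa_1|=|k_aa_2|$, and the distance formulas in item~\ref{item:values} are then a direct evaluation of $d_H(K_0,A_i)$ from the $\max$-of-sup definition, giving $d_H(K_0,A_1)=d_H(K_0,A_2)=\om(t)$ and $d_H(K_0,A_3)=\sqrt{1+t^2+t}$ for every $t\in(0,1/2)$.

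Along the family $\cT$ this yields $S_\cA\bigl(T_{ab}(t)\bigr)=2\sqrt{1+t^2-t}+\sqrt{1+t^2+t}$, a smooth function on $(0,1/2)$; differentiating, setting the derivative to zero, and clearing radicals produces the degree-$4$ equation whose relevant root is $t_0$ (this is the content of Lemma~\ref{lem:k_a_k_b_length}), and substitution gives the closed form $S_\cA(K_0)=2.94645\dots<3$. This settles item~\ref{item:values} \emph{modulo} the assertion that $K_0$ is a global minimizer, i.e.\ $K_0\in\S(\cA)$.

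The crux is exactly this global lower bound $S_\cA(K)\ge S_\cA(K_0)$ for every compact $K$. My plan is to pass to the distance vector $\vec r=(r_1,r_2,r_3)$, $r_i=d_H(K,A_i)$, and use Assertion~\ref{ass:balls} to replace $K$ by the maximal candidate $K_{\vec r}(\cA)=\cap_i B_{r_i}(A_i)$: a vector $\vec r$ is realized by some compact iff $K_{\vec r}(\cA)\ne\0$ and $\rho(a_i,K_{\vec r}(\cA))\le r_i$, $\rho(b_i,K_{\vec r}(\cA))\le r_i$ for all $i$. This reduces the problem to minimizing $r_1+r_2+r_3$ over the explicit three-parameter region cut out by the six covering conditions on the hexagon vertices. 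The danger, and the source of the real work, is that the naive pairwise bounds — each forced by covering one vertex, of the form $r_i+r_j\ge\sqrt3$ — only give $S_\cA\ge 3\sqrt3/2\approx2.598$, strictly weaker than $S_\cA(K_0)$; the three constraints cannot be made simultaneously tight, so one must analyze the actual shape of $K_{\vec r}(\cA)$ (the curvilinear $4$-gon) to capture the tension. I would then exploit $\sigma$ and $R$ to argue that a minimizer may be taken on a symmetric slice $r_1=r_2$, on which the extreme covering points of the pairs $\{a_1,a_2\}$ and $\{b_1,b_2\}$ are forced to be $k_a$ and $k_b$, collapsing feasibility onto the family $\cT$ so that the previous paragraph closes the bound. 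The step I expect to be hardest is precisely justifying this reduction: ruling out a fully asymmetric competitor vector with $r_1,r_2,r_3$ all distinct beating the symmetric configuration.

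Granting item~\ref{item:values}, the remaining items are short. Item~\ref{item:max_com} is immediate from Assertion~\ref{ass:uniqueness_maximal_compact}: as $d\in\Om(\cA)$, the unique maximal element of $\Sigma_d(\cA)$ is $K_d(\cA)$. For item~\ref{item:min_com} I would show that every $K\in\Sigma_d(\cA)$ contains $k_a$: any $x\in K$ with $|xa_1|\le\om_0$ lies in $K_d(\cA)\subset B_{\om_0}(A_2)\cap B_{d_3}(A_3)$; since $B_{\om_0}(a_1)$ and $B_{\om_0}(b_2)$ are disjoint (their centers are antipodal, at distance $2>2\om_0$), such an $x$ must lie within $\om_0$ of $a_2$, hence in the lens $B_{\om_0}(a_1)\cap B_{\om_0}(a_2)$, and intersecting this lens with $B_{d_3}(A_3)$ leaves only its inner vertex $k_a$. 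By $\sigma$-symmetry $k_b\in K$ as well, so $K_0\subset K$ and $K_0$ is the unique minimal Steiner compact. Finally, the last item follows by combining the global lower bound with $R$: the analysis above identifies the only optimal distance vectors as $d$ and its two cyclic images, and $R$ carries $\Sigma_d(\cA)$ onto the corresponding classes, giving exactly three, related by rotations through $2\pi/3$ and $4\pi/3$; that $K_0$ is fixed by $\sigma$ but not by $R$ is precisely the advertised absence of the full rotational symmetry.
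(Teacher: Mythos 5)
Your handling of everything except the global bound is essentially right, and in places close to the paper: the computation of $d_H\bigl(T_{ab}(t),A_i\bigr)$ and the one-variable minimization are the paper's Lemma~\ref{lem:k_a_k_b_length}; item~\ref{item:max_com} is, as you say, immediate from Assertion~\ref{ass:uniqueness_maximal_compact}; and your lens argument forcing $k_a,k_b\in K$ for every $K\in\Sigma_{d}(\mathcal{A})$ is a legitimate route to item~\ref{item:min_com}. The genuine gap is the one you flag yourself: the reduction of the global minimization to the symmetric slice $r_1=r_2$. That step does not follow from the $\sigma$- and $R$-invariance of the problem. Invariance only implies that $\sigma$ permutes minimizers, carrying one with distance vector $(r_1,r_2,r_3)$ to one with $(r_2,r_1,r_3)$; it does not produce a minimizer whose own vector is symmetric. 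Nor is there a symmetrization to fall back on: for the union one only gets $d_H\bigl(A_i,K\cup\sigma(K)\bigr)\le\max\bigl(d_H(A_i,K),d_H(A_i,\sigma(K))\bigr)$, so the resulting bound $2\max(r_1,r_2)+r_3$ is at least $S_\cA(K)$ — the wrong direction — while $K\cap\sigma(K)$ may be empty or lie farther from the $A_i$. Since item~\ref{item:values}, item~\ref{item:min_com} and the classification in the last item all rest on knowing every optimal distance vector, the proposal as it stands proves none of them.

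What fills this hole in the paper is a mechanism that never needs the symmetric slice. Take any Steiner compact $K$, order its distances $\dl_1\le\dl_2\le\dl_3$, and put $s=\dl_1+\dl_2$ (so $s<2$, since otherwise $S_\cA(K)\ge 3$). By Assertion~\ref{ass:balls}, $K$ meets $B_{\dl_2}(b_2)$ and lies in $B_{\dl_1}(A_1)\cap B_{\dl_2}(A_2)$; since $|a_1b_2|=2>s$, any point of $K\cap B_{\dl_2}(b_2)$ cannot be within $\dl_1$ of $a_1$, hence lies in the lens $M_{b,s,\dl_1}=B_{\dl_1}(b_1)\cap B_{\dl_2}(b_2)$. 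All these lenses with $\dl_1\le s/2$ are contained in the region $E^{-}_{b,s}$ cut from the elliptical disk with foci $b_1,b_2$ and focal sum $s$, and the point of $E^{-}_{b,s}$ nearest to $A_3$ is exactly the point $q=k_b(t_q)$ on $[o,a_2]$ — a member of the family $\cT$. Hence $\dl_3\ge|a_3q|$ and
$$
S_\cA(K)=s+\dl_3\ \ge\ |b_1q|+|b_2q|+|a_3q|\ =\ S_\cA\bigl(T_{ab}(t_q)\bigr)\ \ge\ S_\cA(K_0).
$$
The essential point is that this bound depends only on the sum $s$, not on how $s$ splits between $\dl_1$ and $\dl_2$: the ellipse absorbs every asymmetric competitor at once, which is precisely the capability your plan lacks. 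Moreover, the equality analysis of the same inequality (uniqueness of $t_0$ as the minimizer of $f$, and of $q$ as the nearest point) is what pins down the optimal vectors as $d$ and its cyclic images and forces $k_b(t_0)\in K$; it thus simultaneously delivers the last item and item~\ref{item:min_com}. So your proposal correctly isolates the difficulty but does not overcome it; the half-ellipse argument, or some equivalent device handling arbitrary splits of $s$, is indispensable.
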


\begin{proof}
We start with the following technical Lemma.

\begin{lem}\label{lem:k_a_k_b_length}
Under the above notations, 
\begin{gather*}
d_H\bigl(A_1,T_{ab}(t)\bigr)=|a_1k_a|=|b_1k_b|=\om, \qquad d_H\bigl(A_2,T_{ab}(t)\bigr)=|a_2k_a|=|b_2k_b|=\om, \\
d_H\bigl(A_3,T_{ab}(t)\bigr)=|a_3k_b|=|b_3k_a|=\sqrt{1+t^2+t},
\end{gather*}
therefore, 
$$
S_\cA\bigl(T_{ab}(t)\bigr)=\bigl|b_1k_b(t)\bigr|+\bigl|b_2k_b(t)\bigr|+\bigl|a_3k_b(t)\bigr|=2\sqrt{1+t^2-t}+\sqrt{1+t^2+t}.
$$ 
The function $f(t)=S_\cA\bigl(T_{ab}(t)\bigr)$ attains its least value at the unique point $t_0$, where $t_0=\frac{\sqrt{5}}{4}-\frac{1}{2}\sqrt{\sqrt{5}-7/4}=0.210424\dots$, and this least value is equal to $\sqrt{\frac{35}{8}+\frac{3}{8}\sqrt{-47 + 80 \sqrt{5}}}=2.94645\dots$.
\end{lem}

\begin{proof}
The Hausdorff distances between two-element sets $A_i$ and $T_{ab}(t)$, $i=1,\,2,\,3$, can be calculated directly taking into account that $\om>1-t$. The value $S_\cA\bigl(T_{ab}(t)\bigr)$ is equal to the sum of those distances. The corresponding function $f(t)$ is defined and differentiable for all $t\in\R$, and its derivative has the form
$$
f'(t)=\frac{2t-1}{\sqrt{t^2-t+1}}+\frac{2 t+1}{2 \sqrt{t^2+t+1}}.
$$ 
The equation $f'(t)=0$ is equivalent to
$$
2 \sqrt{t^2+t+1} (1-2 t)=(2 t+1) \sqrt{t^2-t+1}.
$$ 
Its solutions have to satisfy the inequality $-1/2\le t\le 1/2$, and for such $t$ the equation is equivalent to the following equation of degree $4$:
$$
4 t^4+t^2-5 t+1=\frac{1}{4}(4 t^2-2 \sqrt{5} t-\sqrt{5}+3)(4 t^2+2 \sqrt{5} t+\sqrt{5}+3)=0.
$$
The latter equation possesses two real roots (the roots of the first quadratic term), and only one of them belongs to the interval $[-1/2,1/2]$. This root is $t_0=\frac{\sqrt{5}}{4}-\frac{1}{2}\sqrt{\sqrt{5}-7/4}$. The point $t_0$ is the unique minimum point of the function $f$. It remains to calculate $f(t_0)$. Lemma is proved\footnote{The calculations in the proof of Lemma were proceeded with {\sl Mathematica\/} by Wolfram Research Inc.}. 
\end{proof}

Let us return to the proof of Theorem. Due to Corollary~\ref{cor:H_space_existence}, for the boundary $\mathcal{A}\subset H(\mathbb{R}^2)$ a Steiner compact $K$ does exist. Due to definition of Steiner compact and in accordance with Lemma~\ref{lem:k_a_k_b_length}, we have $S_\cA(K)\le S_\cA(K_0)<3$. Put $\dl_i=d_H(A_i,K)$. Without loss of generality, assume that the values  $\dl_i$ are ordered as $\dl_1\leq \dl_2 \leq \dl_3$, and put $\dl_1+\dl_2 = s$ and $\dl=(\dl_1,\dl_2,\dl_3)$. Then $\dl_1 \leq s/2$, and $\dl_2\ge s/2$. Besides, $s<2$, because otherwise $S(K)=s+\dl_3\geq s+\dl_2\ge 3 s/2 \ge 3$, that contradicts to the estimate obtained in Lemma~\ref{lem:k_a_k_b_length}.

Put $P_{s,\dl_1}=B_{\dl_1}(A_1)\cap B_{\dl_2}(A_2)$,  $M_{a,s,\dl_1}=B_{\dl_1}(a_1)\cap B_{\dl_2}(a_2)$, and $M_{b,s,\dl_1}=B_{\dl_1}(b_1)\cap B_{\dl_2}(b_2)$, where $\dl_2=s-\dl_1$, see Figure~\ref{ris:splitting_arc_60_1}. Since $|a_1b_2|=2$, and $\dl_1+\dl_2<2$, then $B_{\dl_1}(a_1)\cap B_{\dl_2}(b_2)=\0$, and hence, it is not difficult to verify (using set-theoretical formulas) that $P_{s,\dl_1} \cap B_{\dl_1}(a_1) = M_{a,s,\dl_1}$ and $P_{s,\dl_1} \cap B_{\dl_2}(b_2) = M_{b,s,\dl_1}$. On the other hand, $K\subset K_\dl(\cA)\subset P_{s,\dl_1}$ in accordance to Assertion~\ref{ass:uniqueness_maximal_compact}. Besides, since $B_{\dl_i}(K)\supset\{a_i,b_i\}$, $i=1,\,2,\,3$, then the compact set $K$ intersects all $B_{\dl_i}(a_i)$ and $B_{\dl_i}(b_i)$. Therefore, the sets $M_{a,s,\dl_1} \cap K$ and $M_{b,s,\dl_1} \cap K$ are nonempty.

Notice that $E_{b,s}:=\cup_{\dl_1\in[0,s]}M_{b,s,\dl_1}$ is a compact set bounded by the ellipse $E(b_1b_2, s)$ with foci at $b_1$, $b_2$ and the sum of focal radii $s$. By $E^{-}_{b,s}$ we denote the set $\cup_{\dl_1\in[0,s/2]}M_{b,s,\dl_1}$ (see Figure~\ref{ris:splitting_arc_60_2}). Since the set $M_{b,s,\dl_1} \cap K$ is nonempty for some $\dl_1\le s/2$, then $K\cap E^{-}_{b,s}\ne\0$. 

\begin{figure}[h]
\begin{center}
\begin{minipage}[h]{0.43\linewidth}
\includegraphics[width=1\linewidth]{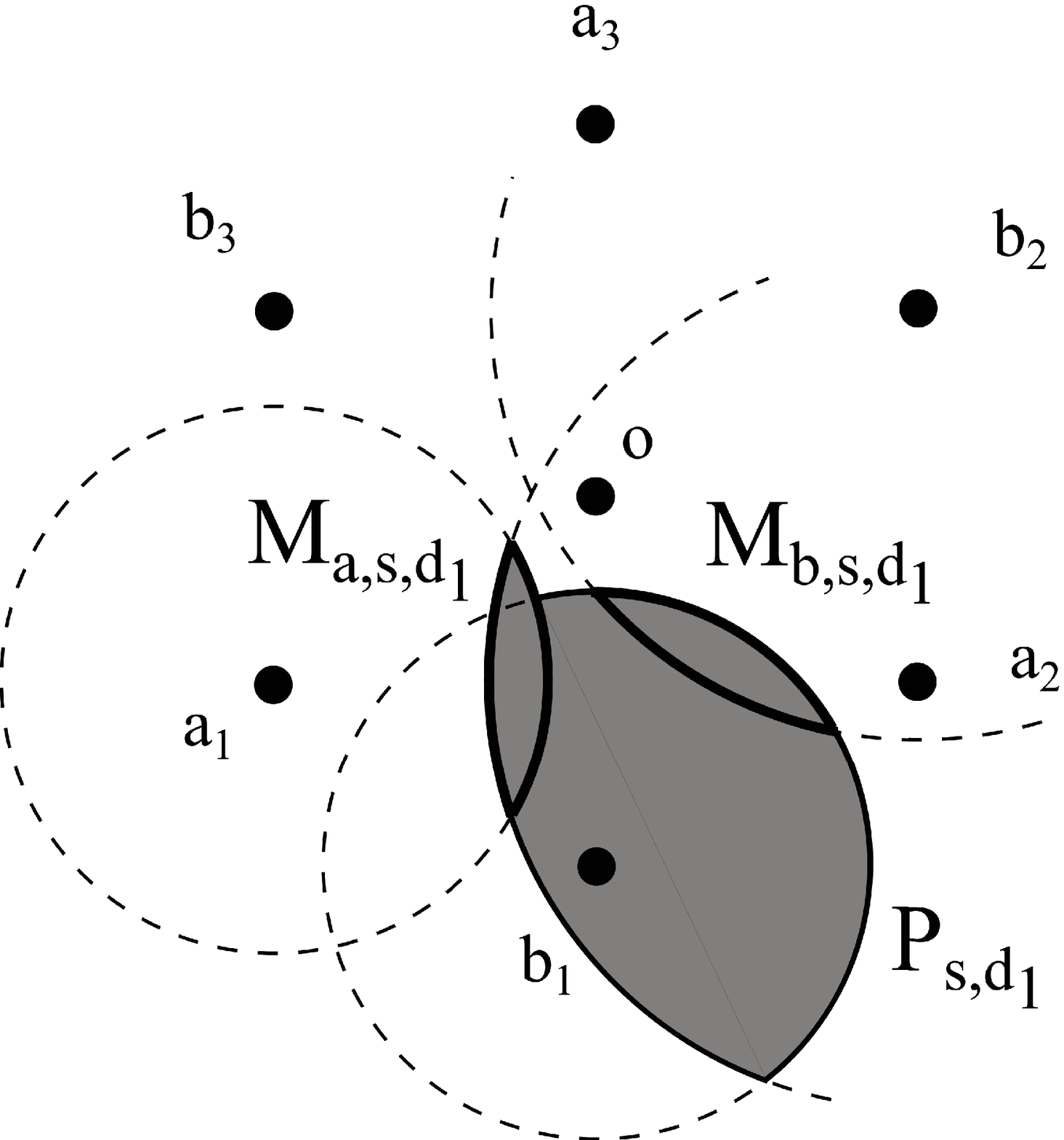}
\caption{The compact sets $M_{a,s,\dl_1}$, $M_{b,s,\dl_1}$, and $P_{s,\dl_1}$ (filled with gray).}
\label{ris:splitting_arc_60_1}
\end{minipage}
\hfill
\begin{minipage}[h]{0.47\linewidth}
\includegraphics[width=1\linewidth]{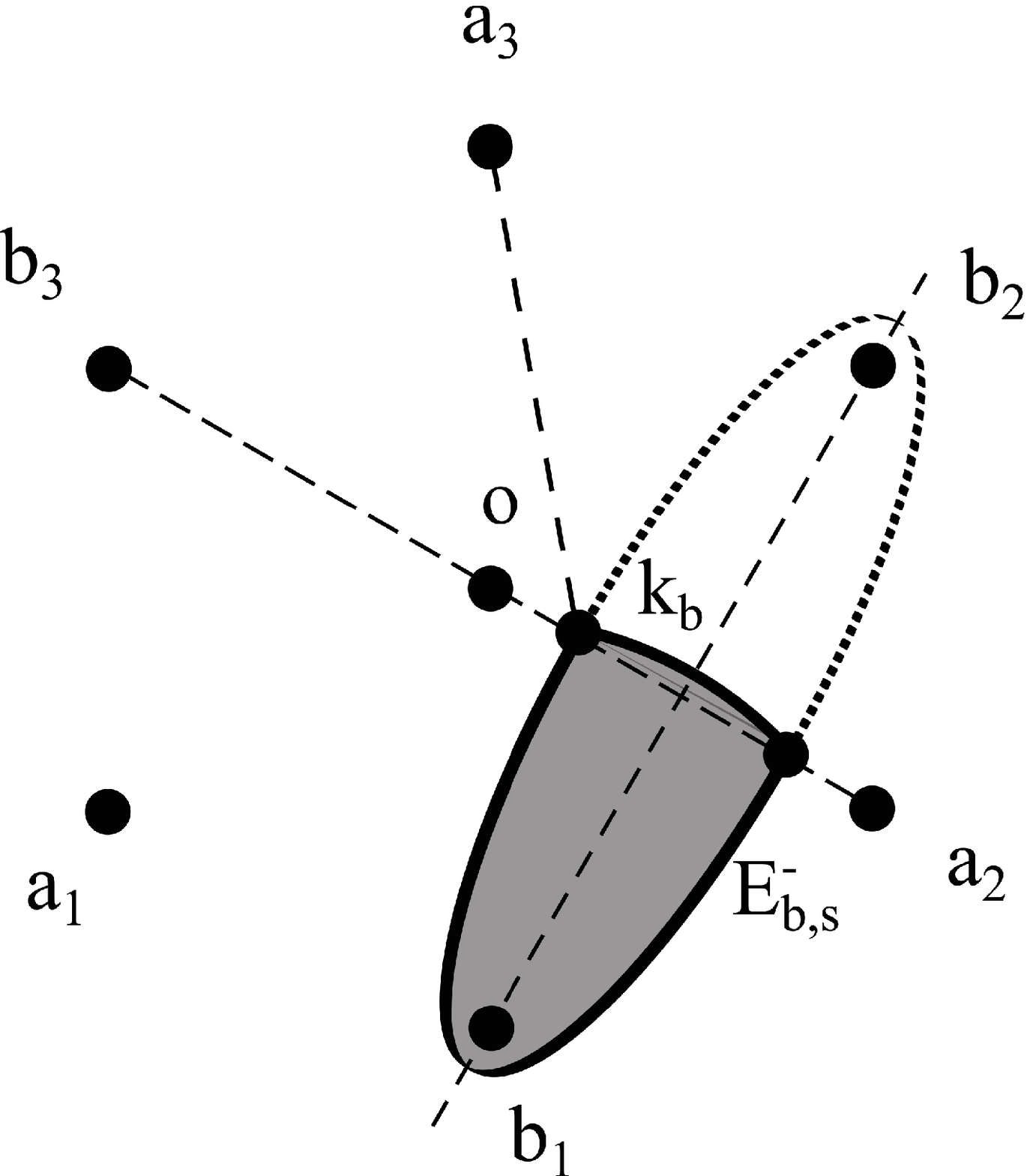}
\caption{The ellipse $E(b_1b_2, s)$, and the compact set $E^{-}_{b,s}$ (filled with gray).}
\label{ris:splitting_arc_60_2}
\end{minipage}
\end{center}
\end{figure}

By $q$ we denote the intersection point of the ellipse $E(b_1b_2,s)$ and the segment $[o,a_2]$, which is closest to $o$. Since $q\in E(b_1b_2,s)$, then $s=|b_1q|+|b_2q|$. It is clear that $|b_3q|=|a_3o|+|oq|>|a_3q|$, therefore $|a_3E_{b,s}^-|=|a_3q|<|b_3q|=|b_3E_{b,s}^-|$. Further, since $K\cap E^{-}_{b,s}\ne\0$, then we have $d_H(K,A_3)=\dl_3\geq |a_3E_{b,s}^-|=|a_3q|$. Thus, $S_\cA(K)=s+\dl_3\geq |b_1q|+|b_2q|+|a_3q|$.

Put $|oq|=t_q$. Notice that  $t_q\in(0,1/2)$, and hence, under the notations from Lemma~\ref{lem:k_a_k_b_length}, $q=k_b(t_q)$ and $|b_1q|+|b_2q|+|a_3q|=S_\cA\bigl(T_{ab}(t_q)\bigr)$. Therefore, $S_\cA(K)\ge S_\cA\bigl(T_{ab}(t_q)\bigr)$.  On the other hand, $S_\cA(K)\le S_\cA\bigl(T_{ab}(t_q)\bigr)$ because $K$ is a Steiner compact, so  $S_\cA(K)=S_\cA\bigl(T_{ab}(t_q)\bigr)$, and hence, for some $t_q$, the two-point compact set $T_{ab}(t_q)$ from the family  $\cT$ is a Steiner compact also. 

Thus, the family $\cT$ of two-point compact sets contains a Steiner compact. Therefore, the two-point compact set  $K_0=T_{ab}(t_0)$ which the function $S_\cA$ restricted to $\cT$ attains its least value at due to Lemma~\ref{lem:k_a_k_b_length}, is a Steiner compact. The latter implies that $S_\cA(K)=S_\cA\bigl(T_{ab}(t_0)\bigr)$. The Hausdorff distances between  $K_0$ and $A_i$ are calculated in Lemma~\ref{lem:k_a_k_b_length}, that completes the proof of Item~(\ref{item:values}) of Theorem.

Now let us describe the set $\Sigma(\cA)$ of all Steiner compacts. Notice that in accordance with Lemma~\ref{lem:k_a_k_b_length}, the function $f(t)=S_\cA\bigl(T_{ab}(t)\bigr)$ attends its least value at the unique point $t_0$, therefore, $t_q=t_0$, and $K_0$ is the unique Steiner compact in $\cT$. So, for an arbitrary Steiner compact $K$ from any class $\Sigma_{\dl}(\mathcal{A})$, $\dl_1\leq \dl_2 \leq \dl_3$, the relations $S_\cA(K)=s_0+\dl_3 = |b_1q|+|b_2q|+|a_3q|=S_\cA(K_0)$ are valid, where $q=k_b(t_0)$, $s_0=\dl_1+\dl_2=|b_1q|+|b_2q|$, and $\dl_3=d_H(K,A_3)=|a_3q|$. Also notice, that $q=k_b(t_0)$ is the closest point to $a_3$ from the half-ellipse $E^{-}_{b,s_0}$, therefore $K\cap E^{-}_{b,s_0}=\bigl\{k_b(t_0)\bigr\}$, because $d_H(K,A_3)>|a_3q|$ otherwise. Further, since $M_{b,s_0,\dl_1} \subset E^{-}_{b,s_0}$ and $M_{b,s_0,\dl_1} \cap K\ne\emptyset$, then $M_{b,s_0,\dl_1} \cap K=\bigl\{k_b(t_0)\bigr\}$.

The Steiner compact $K_0$ constructed above belongs to the class $\Sigma_{d}(\mathcal{A})$, where  $d_1=d_2=s_0/2$, and $d_3=\bigl|a_3k_b(t_0)\bigr|$. Assume that there exists another class $\Sigma_{g}(\mathcal{A})$, $g=(g_1,g_2,g_3)$, $g_1\le g_2\le g_3$.  Then, as it has been already shown above, $g_1+g_2=s_0$, and $g_3=\bigl|a_3k_b(t_0)\bigr|$. If $g\neq d$, then $g_1<g_2$. But then $M_{b,s_0,g_1}$ does not contain the point $k_b(t_0)$, a contradiction. Thus,  $\Sigma_{d}(\mathcal{A})$ is the unique class such that the $d_1\le d_2\le d_3$.

It is clear that cyclic permutations of $d_1,d_2,d_3$ correspond to rotations  by $\pm2\pi/3$ of the solution to Fermat--Steiner problem for $\cA$. Such rotations of a solution from the class $\Sigma_{d}(\mathcal{A})$ takes it onto another solution which belongs to the class with permuted $d_i$. Therefore, there are exactly three classes $\Sigma_{d}(\mathcal{A})$, that completes the proof of the last Item of Theorem.

Steiner compact $K_0$ is minimal element in its class $\Sigma_{d}(\mathcal{A})$, because any proper compact subset of $K_0$ consists of a single point, but $S_\cA\bigl(\{x\}\bigr)\ge 3$ for any $x\in\R^2$. To prove the latter inequality notice that $d_H\bigl(\{x\},A_i\bigr)\ge|xa_i|$, and hence, $S_\cA\bigl(\{x\}\bigr)\ge |xa_1|+|xa_2|+|xa_3|$. In its turn, $|xa_1|+|xa_2|+|xa_3|$ is not less than the length of a shortest tree for the triangle $a_1a_2a_3$ that is equal to $3$. The set $K_{d}(\mathcal{A})$ a maximal Steiner compact in accordance with Assertion~\ref{ass:uniqueness_maximal_compact}. Thus, Items~\ref{item:min_com} and~\ref{item:max_com} of Theorem are proved.
\end{proof}

\end{document}